\documentclass[leqno]{amsart}
\usepackage{amsmath,amsfonts,amsthm,amssymb,indentfirst,epic,url}

\setlength{\textwidth}{6.5in}
\setlength{\textheight}{9.2in} 
\setlength{\evensidemargin}{0in}
\setlength{\oddsidemargin}{0in}
\setlength{\topmargin}{-.5in}
\sloppy

\setlength{\mathsurround}{1.67pt}

\newtheorem{theorem}{Theorem}
\newtheorem{lemma}[theorem]{Lemma}
\newtheorem{corollary}[theorem]{Corollary}

\newtheorem{question}[theorem]{Question}

\newcommand{\Q}{\mathbb Q}
\newcommand{\Z}{\mathbb Z}
\renewcommand{\r}{\mathrm}

\raggedbottom

\begin{document}

\begin{center}
\texttt{Comments, corrections, and related references welcomed,
as always!}\\[.5em]
{\TeX}ed \today
\vspace{2em}
\end{center}

\title%
{Bilinear maps on Artinian modules}
\thanks{This preprint is readable online at
\url{http://math.berkeley.edu/~gbergman/papers/}.
}

\subjclass[2010]{Primary: 13E10, 16P20;
Secondary: 13E05, 15A63, 16D20, 16P40.}
\keywords{Nondegenerate bilinear map, Artinian module,
Noetherian module, bimodule.
}

\author{George M. Bergman}
\address{University of California\\
Berkeley, CA 94720-3840, USA}
\email{gbergman@math.berkeley.edu}

\begin{abstract}
It is shown that if a bilinear map $f:A\times B\to C$ of modules
over a commutative ring $k$ is nondegenerate (i.e.,
if no nonzero element of $A$ annihilates all of $B,$ and vice
versa), and $A$ and $B$ are Artinian, then
$A$ and $B$ are of finite length.

Some consequences are noted.
Counterexamples are given to some attempts to generalize the above
statement to balanced bilinear maps
of bimodules over noncommutative rings, while the
question is raised whether other such generalizations are true.
\end{abstract}
\maketitle

Rings and algebras will be understood to be associative and
unital, except where the contrary is stated.

Examples of modules over a commutative
ring $k$ that are Artinian but not Noetherian are
well known; for example, the $\!\Z\!$-module $\Z_{p^\infty}.$
However, such modules do not show up as underlying modules of
$\!k\!$-algebras.
(We shall see in \S\ref{S.HL} that this can be deduced, though
not trivially, from the Hopkins-Levitzki Theorem, which says
that left Artinian rings are also left Noetherian.)
The result of the next section can be thought of as a generalization
of this fact.

I am grateful to J.\,Krempa for pointing out two misstatements
in the first version of this note, and to D.\,Herbera, K.\,Goodearl,
T.\,Y.\,Lam and L.\,W.\,Small for references to related literature,

\section{Our main result}\label{S.main}

In the proof of the following theorem, it is
striking that everything before the next-to-last sentence works,
mutatis mutandis, if $A$ and $B$ are both
assumed Noetherian rather than Artinian, though the
final conclusion is clearly false in that case.
(On the other hand, the argument does not work at all if
one of $A$ and $B$ is assumed Artinian, and the other Noetherian.)

\begin{theorem}\label{T.main}
Suppose $k$ is a commutative ring, and
$f:A\times B\to C$ a bilinear map of $\!k\!$-modules
which is nondegenerate, in the sense that for every nonzero
$a\in A,$ the induced map $f(a,-):B\to C$ is nonzero, and for
every nonzero $b\in B,$ the induced map $f(-,b):A\to C$ is nonzero.

Then if $A$ and $B$ are Artinian, they both have finite length.
\end{theorem}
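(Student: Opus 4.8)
The plan is to strip $A$ down one finite-length ``semisimple layer'' at a time, the stripping being controlled by the socle of $B$, and then to use the descending chain condition on $A$ to see that the process terminates after finitely many steps in the zero module. This will give $\ell(A)<\infty$, after which $\ell(B)<\infty$ follows by applying the same result to the map $(b,a)\mapsto f(a,b)$, which is again nondegenerate.

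First I would recall two elementary facts about an Artinian $k$-module $M$: if $M\ne 0$ then $\operatorname{soc}(M)\ne 0$ (a minimal nonzero submodule is simple), and $\operatorname{soc}(M)$ is a \emph{finite} direct sum of simple submodules, since an infinite one would yield an infinite strictly descending chain of submodules; in particular every semisimple Artinian module has finite length. The key construction is then the following. Given the nondegenerate $f\colon A\times B\to C$, set
\[
A'=\{a\in A: f(a,\operatorname{soc} B)=0\},\qquad B'=\{b\in B: f(A',b)=0\}\ \ (\supseteq\operatorname{soc} B).
\]
I would establish three things. (i) \emph{$\ell(A/A')<\infty$:} the map $a\mapsto f(a,-)|_{\operatorname{soc} B}$ identifies $A/A'$ with a submodule of $\operatorname{Hom}_k(\operatorname{soc} B,C)$, and writing $\operatorname{soc} B$ as a finite direct sum of simple modules $k/\mathfrak m_j$ gives $\operatorname{Hom}_k(\operatorname{soc} B,C)\cong\bigoplus_j\{c\in C:\mathfrak m_j c=0\}$, a finite direct sum of vector spaces over the fields $k/\mathfrak m_j$ and hence a semisimple $k$-module; so $A/A'$ is semisimple and Artinian, hence of finite length. (ii) \emph{$f$ induces a nondegenerate map $\bar f\colon A'\times(B/B')\to C$:} it is well defined because $f(A',B')=0$; for $0\ne a\in A'$ a witness $b_0$ with $f(a,b_0)\ne 0$ cannot lie in $B'$, so $\bar f(a,-)\ne 0$; and for $b\notin B'$ the definition of $B'$ directly produces some $a\in A'$ with $f(a,b)\ne 0$, so $\bar f(-,b+B')\ne 0$. (iii) \emph{If $A\ne 0$ then $A'\subsetneq A$:} otherwise $f(A,\operatorname{soc} B)=0$, forcing $\operatorname{soc} B\subseteq\{b:f(A,b)=0\}=0$ by nondegeneracy, hence $B=0$ (here $B$ Artinian) and so $A=0$.

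With this in hand I would iterate: from $(A,B,f)$ with $A,B$ Artinian and $A\ne 0$, pass to $(A',\,B/B',\,\bar f)$, again a nondegenerate pairing of Artinian modules, and continue. This produces a chain $A=A_0\supsetneq A_1\supsetneq A_2\supsetneq\cdots$, strict as long as the current term is nonzero, with $\ell(A_i/A_{i+1})<\infty$ for each $i$ by~(i). Since $A$ is Artinian the chain must be finite, hence reaches $A_n=0$; then $\ell(A)=\sum_{i<n}\ell(A_i/A_{i+1})<\infty$, and $\ell(B)<\infty$ follows symmetrically. (Note that everything up to the invocation of the descending chain condition to cut off the chain would go through verbatim with ``Artinian'' replaced by ``Noetherian'', where of course the final conclusion is false.)

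I expect the main obstacle to be getting part~(ii) exactly right: verifying that restricting the first variable to $A'$ and then collapsing the right-hand radical $B'$ genuinely restores \emph{two-sided} nondegeneracy, not merely the $f(a,-)$ half. A secondary point to watch is part~(i): $\operatorname{Hom}_k(\operatorname{soc} B,C)$ carries a $k$-module structure — and is semisimple — only because $k$ is commutative, and this is the feature whose failure for noncommutative $k$ leaves room for the counterexamples promised in the abstract. Finally, it is the combination of~(ii) with the use in~(iii) of ``a nonzero Artinian module has nonzero socle'' that forces the hypothesis on \emph{both} $A$ and $B$: if only one is Artinian the construction can stall with $A'=A$ and peel off nothing.
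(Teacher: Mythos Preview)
Your argument is correct, but it follows a genuinely different route from the paper's. The paper does not use socles at all: instead it observes that the annihilator submodules of $A$ and of $B$ form antiisomorphic lattices (a Galois connection), so that the Artinian hypotheses give both ACC and DCC on these lattices, and hence a finite maximal chain $\{0\}=A_0\subsetneq\cdots\subsetneq A_n=A$ of annihilator submodules with dual chain $B=B_0\supsetneq\cdots\supsetneq B_n=\{0\}$. For each factor pairing $f_i\colon(A_{i+1}/A_i)\times(B_i/B_{i+1})\to C$, maximality of the chain forces every nonzero element to have \emph{zero} annihilator on the other side; from this the paper extracts a prime ideal $P_i\subseteq k$ (the common annihilator in $k$ of all nonzero elements), observes that $k/P_i$ sits inside the Artinian module $A_{i+1}/A_i$ and is therefore a field, and concludes that each factor is a finite-dimensional $k/P_i$-vector space.

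Your approach trades the Galois-connection finiteness and the prime-ideal step for the single observation that $A/A'$ embeds in the semisimple module $\operatorname{Hom}_k(\operatorname{soc} B,C)$; this is a cleaner and more asymmetric reduction, using DCC on $A$ only at the very end to terminate the chain, whereas the paper uses Artinianness of both $A$ and $B$ up front to bound the annihilator lattice. The paper's route, on the other hand, exposes more structure (the fields $k/P_i$) and is symmetric throughout. One small slip: your closing parenthetical, that ``everything up to the invocation of DCC would go through verbatim with Artinian replaced by Noetherian,'' is not quite right in your framework---step~(iii) uses that a nonzero Artinian module has nonzero socle, which fails for Noetherian modules---so the parallel the paper draws (which is phrased for its own argument) does not transport cleanly to yours. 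This does not affect the validity of your proof.
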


\begin{proof}
If elements $a\in A,\ b\in B$ satisfy $f(a,b)=0,$ we shall say
they \emph{annihilate} one another.
(The concept of an element $c\in k$ annihilating an element
$x$ of $A,$  $B,$ or $C$ will retain its usual meaning, $c\,x=0.)$
For subsets $Y\subseteq B,$ respectively $X\subseteq A,$ we
define the annihilator sets
\begin{equation}\begin{minipage}[c]{35pc}\label{d.perp}
$Y^\perp\ =\ \{a\in A\mid(\forall\,y\in Y)\ f(a,y)=0\}
\ \subseteq\ A,$\\[0.3em]
$X^\perp\ =\ \{b\in B\mid(\forall\,x\in X)\ f(x,b)=0\}
\ \subseteq\ B.$
\end{minipage}\end{equation}

We see that these are submodules of $A$ and $B$ respectively,
that the set of annihilator submodules in $A$ (respectively, in $B)$
forms a lattice (in the order-theoretic sense) under inclusion,
and that these two lattices of annihilator submodules
are antiisomorphic to one another, via the maps $U\mapsto U^\perp.$
(This situation is an example of a ``Galois connection''
\cite[\S5.5]{245}, but I will not assume familiarity with
that formalism.)

When $A$ and $B$ are Artinian, these lattices of submodules
of $A$ and $B$ both have descending chain condition; so since
they are antiisomorphic, they also have ascending chain condition.
Hence all their chains have finite length.
Let us choose a maximal
(i.e., unrefinable) chain of annihilator submodules of $A,$
\begin{equation}\begin{minipage}[c]{35pc}\label{d.A_i}
$\{0\}=A_0\ \subseteq\ A_1\ \subseteq\ \dots\ \subseteq A_n=A.$
\end{minipage}\end{equation}
This yields a maximal chain of annihilator submodules of $B,$
\begin{equation}\begin{minipage}[c]{35pc}\label{d.B_i}
$B=B_0\ \supseteq\ B_1\ \supseteq\ \dots\ \supseteq B_n=\{0\},$
\end{minipage}\end{equation}
where
\begin{equation}\begin{minipage}[c]{35pc}\label{d.*}
$B_i\ =\ A_i^\perp,\qquad A_i\ =\ B_i^\perp.$
\end{minipage}\end{equation}

It is easy to see that for each $i,$ $f$ induces a $\!k\!$-bilinear map
\begin{equation}\begin{minipage}[c]{35pc}\label{d.f_i}
$f_i:\ (A_{i+1}/A_i)\ \times\ (B_i/B_{i+1})\ \to\ C,$
\end{minipage}\end{equation}
via
\begin{equation}\begin{minipage}[c]{35pc}\label{d.f_i_def}
$f_i(a+A_i,\,b+B_{i+1})\ =\ f(a,b)$\quad $(a\in A_{i+1},\ b\in B_i).$
\end{minipage}\end{equation}

I claim that under $f_i,$ every nonzero element of $A_{i+1}/A_i$ has
zero annihilator in $B_i/B_{i+1},$ and vice versa.
For if we had any counterexample, say $a\in A_{i+1}/A_i,$ then
its annihilator would be a proper nonzero annihilator submodule of
$B_i/B_{i+1},$ and this would lead to an annihilator submodule of $B$
strictly between $B_i$ and $B_{i+1},$ contradicting the maximality
of the chain~(\ref{d.B_i}).

From this we can deduce that every nonzero element of $A_{i+1}/A_i$
and every nonzero element of $B_i/B_{i+1}$ have the same
annihilator in $k.$
Indeed, if $c\in k$ annihilates the nonzero element
$x\in A_{i+1}/A_i,$ then for every $y\in B_i/B_{i+1},$
$cy$ will annihilate $x,$ hence must be zero; so
every $c\in k$ annihilating one nonzero member of $A$
annihilates all of $B,$ and dually.

It follows that the common annihilator of all nonzero elements
of these two modules is a prime ideal $P_i\subseteq k,$ making
$k/P_i$ an integral domain, such that $A_{i+1}/A_i$
and $B_i/B_{i+1}$ are $\!k/P_i\!$-modules.
Moreover, taking any nonzero element of either of our modules,
say $x\in A_{i+1}/A_i,$ we have $kx\cong k/P_i$ as modules, so
since $A_{i+1}/A_i$ is Artinian, so is $k/P_i.$

But an Artinian integral domain is a field; so $A_{i+1}/A_i$
and $B_i/B_{i+1}$ are vector spaces over the field $k/P_i,$
so the fact that they are Artinian means that they have finite length.
Thus, $A$ and $B$ admit finite chains~(\ref{d.A_i}),~(\ref{d.B_i})
of submodules with factor-modules of finite length; so
they are each of finite length.
\end{proof}

\section{Some immediate consequences}\label{S.cor}

We start with a trivial consequence of our theorem.

\begin{corollary}\label{C.nonnondeg}
If in the statement of Theorem~\ref{T.main}, we assume only one of
the nondegeneracy conditions, namely that for all nonzero
$a\in A,$ the induced map $f(a,-):B\to C$ is nonzero
\textup{(}respectively, that for all nonzero $b\in B,$
the induced map $f(-,b):A\to C$ is nonzero\textup{)},
we can still conclude that $A$ \textup{(}respectively,
$B)$ has finite length.

Assuming neither nondegeneracy condition, we can still
conclude that $A/B^\perp$ and $B/A^\perp$ have finite length.
\end{corollary}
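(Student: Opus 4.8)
The plan is to reduce both assertions to Theorem~\ref{T.main} by passing to appropriate quotients, and in fact to deduce the first assertion from the second. For the second (most general) statement, I would form the induced $\!k\!$-bilinear map
\[
\bar f:\ (A/B^\perp)\times(B/A^\perp)\ \to\ C,\qquad
\bar f(a+B^\perp,\,b+A^\perp)\ =\ f(a,b).
\]
The first step is to check this is well-defined: replacing $a$ by $a+a'$ with $a'\in B^\perp$ and $b$ by $b+b'$ with $b'\in A^\perp$, bilinearity of $f$ expands $f(a+a',\,b+b')$ into $f(a,b)$ plus three terms, each of which vanishes because $a'$ annihilates all of $B$ and $b'$ annihilates all of $A$ (recall the meanings of $B^\perp\subseteq A$ and $A^\perp\subseteq B$ from~(\ref{d.perp})).

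Next I would verify that $\bar f$ is nondegenerate in the sense of Theorem~\ref{T.main}. If $a+B^\perp\neq 0$, i.e.\ $a\notin B^\perp$, then by definition of $B^\perp$ there is some $b\in B$ with $f(a,b)\neq 0$, hence $\bar f(a+B^\perp,\,b+A^\perp)=f(a,b)\neq 0$; so $\bar f(a+B^\perp,-)$ is nonzero. The symmetric argument handles every nonzero element of $B/A^\perp$. Finally, $A/B^\perp$ and $B/A^\perp$ are quotients of the Artinian modules $A$ and $B$, hence are themselves Artinian, so Theorem~\ref{T.main} applies to $\bar f$ and yields that $A/B^\perp$ and $B/A^\perp$ both have finite length.

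For the first assertion, note that the hypothesis that $f(a,-)$ is nonzero for every nonzero $a\in A$ says precisely that $B^\perp=\{0\}$, so $A/B^\perp\cong A$, and the conclusion just obtained gives that $A$ has finite length; the parenthetical alternative is the mirror image, obtained by instead using $A^\perp=\{0\}$.

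There is no genuinely hard step here: everything is routine once Theorem~\ref{T.main} is in hand. The only point requiring a moment's care is that the radicals $B^\perp$ and $A^\perp$ are taken on the correct sides, so that $\bar f$ is simultaneously well-defined (this uses that the quotiented-out submodules are the \emph{full} one-sided radicals) and nondegenerate (this uses that they are \emph{no larger} than those radicals).
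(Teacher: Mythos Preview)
Your proof is correct and follows essentially the same approach as the paper's own proof: pass to the induced nondegenerate bilinear map~(\ref{d./perp}) on the Artinian quotients $A/B^\perp$ and $B/A^\perp$, apply Theorem~\ref{T.main}, and then observe that the one-sided nondegeneracy hypotheses amount to $B^\perp=\{0\}$ or $A^\perp=\{0\}$. You simply spell out in more detail the well-definedness and nondegeneracy checks that the paper leaves to the reader.
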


\begin{proof}
Without any nondegeneracy assumption, note that $f$ induces
a \emph{nondegenerate} bilinear map
\begin{equation}\begin{minipage}[c]{35pc}\label{d./perp}
$A/B^\perp\ \times\ B/A^\perp\ \to\ C.$
\end{minipage}\end{equation}
Since $A/B^\perp$ and $B/A^\perp$ are again Artinian, we can apply
Theorem~\ref{T.main} to~(\ref{d./perp}) and conclude that
both these factor-modules have finite length.

On the other hand, the two nondegeneracy conditions in the first
assertion of the corollary are, respectively,
equivalent to $A=A/B^\perp$ and to $B=B/A^\perp;$ so
combining one or the other of these
with the above result, we get the first asserted conclusion.
\end{proof}

We can now recover a result of A.\,Facchini, C.\,Faith and D.\,Herbera.

\begin{corollary}[{\cite[Proposition~6.1]{CF+DH}}]\label{C.oplus}
The tensor product $A\otimes_k B$ of two Artinian modules over a
commutative ring $k$ has finite length.
\end{corollary}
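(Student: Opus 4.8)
The plan is to exhibit a bilinear map to which Theorem~\ref{T.main} (or rather Corollary~\ref{C.nonnondeg}) applies, and whose finite length forces the conclusion. The obvious candidate is the canonical bilinear map $g : A \times B \to A \otimes_k B$, $(a,b) \mapsto a \otimes b$. This is $k$-bilinear by construction, and $A$, $B$ are Artinian by hypothesis, so the machinery of the previous section is available; the only question is what nondegeneracy we can extract.

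First I would invoke the second (unconditional) assertion of Corollary~\ref{C.nonnondeg}: without assuming either nondegeneracy condition, we conclude that $A/B^\perp$ and $B/A^\perp$ have finite length, where now $\perp$ is taken with respect to $g$. So it remains to identify these quotients. The key observation is that $B^\perp = \{a \in A \mid a \otimes b = 0 \text{ for all } b \in B\}$, and I claim this is precisely the kernel of the natural map $A \to A \otimes_k B$ is not quite the right phrasing — rather, $B^\perp$ is the set of $a$ killed by \emph{every} $b$, i.e.\ $B^\perp = \bigcap_{b \in B} \ker(g(-,b))$. The point to establish is that $g$ restricted to $(A/B^\perp) \times (B/A^\perp)$ is nondegenerate and \emph{still has $A/B^\perp \otimes_k B/A^\perp$ as a natural receptacle}, so that one can run the argument once more — but actually the cleanest route avoids this: having shown $A/B^\perp$ and $B/A^\perp$ are of finite length, I would show directly that $A \otimes_k B$ is a quotient of (a module built from) $A/B^\perp \otimes_k B/A^\perp$, hence of finite length.

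The main obstacle is precisely the last step: relating $A \otimes_k B$ to $(A/B^\perp) \otimes_k (B/A^\perp)$. In general $A \otimes_k B \twoheadrightarrow (A/B^\perp) \otimes_k (B/A^\perp)$ is a surjection, which is the wrong direction. The fix is to observe that the \emph{image} of $g$ generates $A \otimes_k B$ as a $k$-module, and that $g$ factors as $A \times B \to (A/B^\perp) \times (B/A^\perp) \xrightarrow{\bar g} A \otimes_k B$; so $A \otimes_k B$ is generated by the image of $\bar g$, hence is a homomorphic image of $(A/B^\perp) \otimes_k (B/A^\perp)$ via the universal property of the tensor product applied to $\bar g$. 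Since $A/B^\perp$ and $B/A^\perp$ have finite length over $k$, so does their tensor product (a standard fact: tensor product of modules of finite length is of finite length, e.g.\ by dévissage to simple modules $k/\mathfrak{m}$ and $k/\mathfrak{n}$, whose tensor product is $0$ unless $\mathfrak m = \mathfrak n$ and then is a finite-dimensional $k/\mathfrak m$-vector space). Therefore $A \otimes_k B$, being a quotient of a finite-length module, has finite length.

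Thus the proof reduces to one application of Corollary~\ref{C.nonnondeg} to the canonical pairing, plus the elementary remark that $A \otimes_k B$ is a quotient of $(A/B^\perp) \otimes_k (B/A^\perp)$ through the induced nondegenerate pairing $\bar g$.
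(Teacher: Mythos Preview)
Your proposal is correct and follows essentially the same route as the paper: apply Corollary~\ref{C.nonnondeg} to the canonical pairing $A\times B\to A\otimes_k B$ to get that $A/B^\perp$ and $B/A^\perp$ have finite length, then observe that $A\otimes_k B$ coincides with (or, as you argue, is a quotient of) $(A/B^\perp)\otimes_k(B/A^\perp)$, and invoke the standard fact that a tensor product of finite-length modules has finite length. The paper states the identification $A\otimes_k B\cong(A/B^\perp)\otimes_k(B/A^\perp)$ in one line, whereas you spell out the factorization of $g$ through the quotients via the universal property; your version is slightly more explicit but the content is the same.
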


\begin{proof}
Letting $A^\perp$ and $B^\perp$ be annihilators
with respect to the tensor multiplication
$\otimes:A\times B\to A\otimes_k B,$ the preceding corollary
tells us that $A/B^\perp$ and $B/A^\perp$ have finite length.
But $A\otimes_k B$ can also be regarded as the tensor product of
these factor modules; and a tensor product of modules of finite
length over a commutative ring has finite length.
\end{proof}

Let us next apply Theorem~\ref{T.main}
to the multiplication of a $\!k\!$-algebra $R.$
This does not require $R$
to be associative, so we shall make no such assumption.
In the study of nonassociative algebras, it is
often not natural to require a unit; but without one,
nondegeneracy of the multiplication is not automatic; so
in the statement below we get this nondegeneracy by
dividing out by an appropriate annihilator ideal.

(Caveat: below, we name that ideal $Z(R),$
though it is not the center of $R.$
This notation, from \cite{prod_Lie1}, is based on the
phrase ``zero multiplication'', and also on the use of that symbol in
the theory of Lie algebras, where it does coincide with the center.)

\begin{corollary}\label{C.k-algs}
Let $R$ be a not-necessarily-associative algebra over a
commutative ring $k,$ and let
\begin{equation}\begin{minipage}[c]{35pc}\label{d.Z}
$Z(R)\ =\ \{x\in R\mid xR=Rx=\{0\}\,\}.$
\end{minipage}\end{equation}
Then if $R/Z(R)$ is Artinian as a $\!k\!$-module, it
is also Noetherian as a $\!k\!$-module.

Hence, if $Z(R)=\{0\}$ \textup{(}in particular, if $R$ has
a unit element\textup{)}, then if $R$ is Artinian
as a $\!k\!$-module, it is also Noetherian as a $\!k\!$-module.
\end{corollary}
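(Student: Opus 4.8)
The plan is to manufacture, from the multiplication $\mu:R\times R\to R,$ a \emph{nondegenerate} bilinear map whose two arguments range over modules that are homomorphic images of $R/Z(R)$ --- hence Artinian --- so that Theorem~\ref{T.main} applies to it directly. The point to keep in mind throughout is that we are given that $R/Z(R)$ is Artinian, \emph{not} that $R$ is; so we cannot simply feed $\mu$ itself into the theorem.

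Write $L=\{x\in R\mid xR=\{0\}\}$ and $\Lambda=\{x\in R\mid Rx=\{0\}\}$ for the left- and right-annihilating $\!k\!$-submodules of $R$; since $R$ is not assumed associative these need not be ideals, but only their $\!k\!$-module structure will be used. Then $Z(R)=L\cap\Lambda,$ and in particular $L\supseteq Z(R)$ and $\Lambda\supseteq Z(R).$ First I would check that $\mu$ descends to a $\!k\!$-bilinear map
\[\bar\mu:\ (R/L)\ \times\ (R/\Lambda)\ \longrightarrow\ R,\qquad
\bar\mu(a+L,\,b+\Lambda)\ =\ ab,\]
which is routine, since $a\in L$ or $b\in\Lambda$ each force $ab=0.$ The key observation is that $\bar\mu$ is automatically nondegenerate in the sense of Theorem~\ref{T.main}: if $a+L\ne 0$ then $a\notin L,$ so $ab\ne 0$ for some $b\in R,$ whence $\bar\mu(a+L,-)\ne 0$; and symmetrically on the other side. (In effect we have divided out of $R$ exactly the part responsible for degeneracy.)

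Now $R/L$ and $R/\Lambda,$ being homomorphic images of the Artinian module $R/Z(R),$ are Artinian, so Theorem~\ref{T.main} applied to $\bar\mu$ shows that both have finite length. Finally, the diagonal homomorphism $x+Z(R)\mapsto(x+L,\,x+\Lambda)$ embeds $R/Z(R)$ into $(R/L)\oplus(R/\Lambda)$ --- its kernel being $(L\cap\Lambda)/Z(R)=\{0\}$ --- and a submodule of a module of finite length has finite length; so $R/Z(R)$ has finite length, and in particular is Noetherian. The last assertion follows at once: when $Z(R)=\{0\}$ we have $R/Z(R)=R,$ and when $R$ has a unit element $1,$ any $x\in R$ with $xR=\{0\}$ satisfies $x=x\cdot 1=0,$ so $L=\{0\}$ and hence $Z(R)=\{0\}.$

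I do not anticipate a genuine obstacle; the one thing that must not be overlooked is the mismatch just noted --- the Artinian hypothesis concerns $R/Z(R)$ and not $R$ --- which forces the detour through the factor modules $R/L$ and $R/\Lambda$ (on which nondegeneracy comes for free) before reassembling $R/Z(R)$ from them.
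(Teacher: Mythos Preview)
Your argument is correct and matches the paper's proof essentially line for line: the paper likewise introduces the one-sided annihilators (there called $Z_l(R)$ and $Z_r(R)$), passes to the induced nondegenerate bilinear map $R/Z_l(R)\times R/Z_r(R)\to R,$ applies Theorem~\ref{T.main} to get finite length, and then embeds $R/Z(R)$ in the product. Your added remarks---that $L$ and $\Lambda$ need only be $\!k\!$-submodules, and the explicit verification that a unit forces $Z(R)=\{0\}$---are sound elaborations of points the paper leaves implicit.
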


\begin{proof}
Let
\begin{equation}\begin{minipage}[c]{35pc}\label{d.Z_lr}
$Z_l(R)\ =\ \{x\in R\mid xR=\{0\}\,\},$\quad
$Z_r(R)\ =\ \{x\in R\mid Rx=\{0\}\,\}.$
\end{minipage}\end{equation}
Thus, $Z(R)=Z_l(R)\cap Z_r(R).$
The multiplication of $R$ induces a nondegenerate
bilinear map of $\!k\!$-modules $R/Z_l(R)\times R/Z_r(R)\to R.$
Since $R/Z_l(R)$ and $R/Z_r(R)$ are homomorphic
images of $R/Z(R),$ they are Artinian over $k,$ hence
by Theorem~\ref{T.main} they are Noetherian over $k.$
Hence $R/Z(R),$ which embeds in $R/Z_l(R)\times R/Z_r(R),$
is also Noetherian.

The assertion of the final sentence clearly follows.
\end{proof}

This shows, as mentioned at the start of this note, that
groups such as $\Z_{p^\infty}$ cannot be the additive groups of rings.
There are other groups, such as $\Q/\Z,$ which one feels
should be excluded for similar reasons, though they are not
themselves Artinian.
This leads us to formulate
the following consequence of Theorem~\ref{T.main}.

\begin{corollary}\label{C.local}
Suppose $k$ is a commutative ring, and
$f:A\times B\to C$ a nondegenerate bilinear map of $\!k\!$-modules.

Then if $B$ \textup{(}respectively $A)$
is locally Artinian \textup{(}i.e.,
if every finitely generated submodule thereof is Artinian\textup{)},
then every Artinian submodule of $A$ \textup{(}respectively
$B)$ has finite length.
\end{corollary}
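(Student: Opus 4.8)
The plan is to deduce this from the one-sided form of our main result, Corollary~\ref{C.nonnondeg}, after replacing $B$ by a finitely generated — hence, by hypothesis, Artinian — submodule. By the symmetry of the two assertions it suffices to prove the ``respectively'' version: assuming $B$ locally Artinian, and given an Artinian submodule $A'\subseteq A$, I must show that $A'$ has finite length.

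First I would note that, $f$ being nondegenerate, so is the restricted bilinear map $A'\times B\to C$ on the $A'$-side: for every nonzero $a\in A'$ the map $f(a,-):B\to C$ is nonzero. Since a $\!k\!$-linear map out of $B$ vanishes precisely when it vanishes on every finite subset of $B$, this says $\bigcap_S\,(S^\perp\cap A')=\{0\}$, where $S$ runs over the finite subsets of $B$ and $S^\perp\subseteq A$ is the annihilator~(\ref{d.perp}).

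The crux of the argument is to pass from this intersection over all finite subsets to a single one, using that $A'$ is Artinian. As $S$ ranges over finite subsets of $B$, the submodules $S^\perp\cap A'$ form a family closed under finite intersections (because $(S_1\cup S_2)^\perp=S_1^\perp\cap S_2^\perp$), hence downward directed. In an Artinian module such a family has a least member: a minimal member exists by the descending chain condition, and directedness forces it to lie below every member, so it equals the intersection of the whole family — here $\{0\}$. Thus there is a finite set $\{b_1,\dots,b_m\}\subseteq B$ with $\{b_1,\dots,b_m\}^\perp\cap A'=\{0\}$.

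Finally I would put $B'=kb_1+\dots+kb_m$. It is finitely generated, so Artinian since $B$ is locally Artinian; and because $b_1,\dots,b_m$ generate $B'$ we have $(B')^\perp\cap A'=\{0\}$, i.e.\ the induced bilinear map $A'\times B'\to C$ satisfies $f(a,-):B'\to C$ nonzero for every nonzero $a\in A'$. Now $A'$ and $B'$ are both Artinian, so Corollary~\ref{C.nonnondeg} applies to this map and gives that $A'$ has finite length. The only step carrying any content is the directed-family observation of the preceding paragraph; everything else is bookkeeping.
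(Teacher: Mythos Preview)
Your proof is correct and follows essentially the same route as the paper's: both restrict to the given Artinian submodule on the $A$-side, use its descending chain condition on the downward-directed family of annihilators of finite subsets (equivalently, finitely generated submodules) of $B$ to find a single finitely generated---hence Artinian---$B'\subseteq B$ with trivial annihilator in $A'$, and then invoke Corollary~\ref{C.nonnondeg} on $A'\times B'\to C$. (One trivial slip: the case you write out is the non-``respectively'' one, not the ``respectively'' one, but by the symmetry you correctly note this is immaterial.)
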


\begin{proof}
Let $A_0$ be an Artinian submodule of $A,$ and consider the
annihilators in $A_0$ of all finitely generated submodules of $B.$
This family of
annihilators is clearly closed under finite intersections; so by
descending chain condition on submodules of $A_0,$ the intersection
of this family is itself such an annihilator, say of
the finitely generated submodule $B_0\subseteq B.$
But by nondegeneracy of $f,$ that intersection is $\{0\};$
so $B_0$ has trivial annihilator in $A_0.$
Now assuming $B$ locally Artinian, $B_0$ will be Artinian,
so we can apply Corollary~\ref{C.nonnondeg}
to the restricted map $f:A_0\times B_0\to C,$
and conclude that $A_0$ has finite length.
By symmetry, we also have the corresponding implication
with the roles of $A$ and $B$ interchanged.
\end{proof}

One has obvious analogs of Corollaries~\ref{C.nonnondeg},
\ref{C.oplus} and~\ref{C.k-algs} for this result.
From the last of these,
we see that the $\!\Z\!$-module $\Q/\Z,$ which is locally
Artinian and has Artinian submodules of infinite length, cannot
be the additive group of a unital (associative or nonassociative) ring.

\section{Relation to the Hopkins-Levitzki Theorem}\label{S.HL}

Going back to Corollary~\ref{C.k-algs}, can the case of that result
where $R$ is an associative unital $\!k\!$-algebra be deduced from the
well-known fact that a left Artinian ring is also left Noetherian --
the Hopkins-Levitzki Theorem \cite[Theorem~4.15(A)]{TYL1}\,?

Well, if, as assumed in that
corollary, $R$ is Artinian as a $\!k\!$-module,
then it is certainly Artinian as a left $\!R\!$-module, hence
if it is associative and unital, the Hopkins-Levitzki
Theorem says it is Noetherian as a left $\!R\!$-module.
Can we somehow get from this that it is Noetherian as a $\!k\!$-module?

We can, using the following striking result.
\begin{equation}\begin{minipage}[c]{35pc}\label{d.THL+DH}
(Theorem of Lenagan and Herbera \cite[Theorem on p.2044]{GM}.)
If $R$ and $S$ are rings, and $_R M_S$ a bimodule which
is left Noetherian and right Artinian,
then $M$ is also left Artinian and right Noetherian.
\end{minipage}\end{equation}

Indeed, from~(\ref{d.THL+DH}) we deduce

\begin{corollary}[to~(\ref{d.THL+DH})]\label{C.THL+DH-}
If $R$ is a $\!k\!$-algebra, and $_R M$ an $\!R\!$-module
of finite length \textup{(}as an $\!R\!$-module\textup{)},
and if, moreover, $M$ is Artinian or Noetherian
as a $\!k\!$-module, then it has finite length as a $\!k\!$-module.
\end{corollary}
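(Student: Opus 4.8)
The plan is to realize $M$ as a bimodule over a suitable pair of rings and then invoke the Lenagan--Herbera theorem~(\ref{d.THL+DH}), using the chain condition we are handed as one of its two hypotheses and the finiteness of $M$ over $R$ as the other. Observe first that finite length is equivalent to the conjunction of the Artinian and Noetherian conditions; so since $M$ is already assumed Artinian \emph{or} Noetherian over $k,$ it will suffice to show that, in the respective cases, the other chain condition also holds over $k.$

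For the bimodule structure, recall that (by our standing convention) $R$ is an associative unital $\!k\!$-algebra, so that there is a ring homomorphism $k\to Z(R),$ and that the $\!k\!$-module structure on $M$ is the one obtained by restricting the left $\!R\!$-module structure along this map. Since the image of $k$ is central in $R,$ the formula $m\,c=c\,m$ (the right-hand side meaning the left $\!R\!$-action) makes $M$ into an $(R,k)\!$-bimodule ${}_R M_k$; the bimodule identity $(rm)c=r(mc)$ is precisely the statement that the image of $c$ commutes with $r.$ Reading the $\!R\!$-action on the opposite side instead gives, equivalently, a $(k,R^{\mathrm{op}})\!$-bimodule ${}_k M_{R^{\mathrm{op}}}.$ I will also use, tacitly, that a module of finite length is simultaneously Artinian and Noetherian (so ${}_R M$ is both), and that, $k$ being commutative, a $\!k\!$-module is Artinian (Noetherian) as a left module exactly when it is so as a right module, the lattices of submodules coinciding.

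Now suppose $M$ is Artinian over $k.$ Then the bimodule ${}_R M_k$ is left Noetherian (because ${}_R M$ has finite length) and right Artinian (because $M_k$ is Artinian), so~(\ref{d.THL+DH}) gives that it is also right Noetherian, i.e., that $M$ is Noetherian over $k,$ hence of finite length over $k.$ Suppose instead $M$ is Noetherian over $k.$ Then the bimodule ${}_k M_{R^{\mathrm{op}}}$ is left Noetherian (because ${}_k M$ is Noetherian) and right Artinian (because ${}_R M$ has finite length, hence is Artinian, equivalently $M_{R^{\mathrm{op}}}$ is Artinian), so~(\ref{d.THL+DH}) gives that it is also left Artinian, i.e., that $M$ is Artinian over $k,$ hence again of finite length over $k.$

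With~(\ref{d.THL+DH}) in hand I do not anticipate a real difficulty; the two points calling for a little care are the verification that the left $\!R\!$- and right $\!k\!$-actions on $M$ genuinely commute --- which is precisely where associativity of $R$ and the centrality of the image of $k$ enter --- and the left/right bookkeeping in applying~(\ref{d.THL+DH}), which in the Noetherian case forces one to pass to $R^{\mathrm{op}}$ (equivalently, to apply the theorem to the transposed bimodule) so that the given Noetherian condition occupies the ``left'' slot while ``right Artinian'' is supplied by the finiteness of $M$ over $R.$
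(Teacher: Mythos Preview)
Your argument is correct and is exactly the approach the paper takes: regard $M$ as the bimodule ${}_R M_k$ and apply~(\ref{d.THL+DH}) or its left-right dual. The paper states this in one line, while you have carefully unpacked the bimodule structure and the left/right bookkeeping (passing to $R^{\mathrm{op}}$ in the Noetherian case is equivalent to invoking the dual of~(\ref{d.THL+DH}) on ${}_R M_k$).
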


\begin{proof}
Regard $M$ as a bimodule $_R M_k.$
Then~(\ref{d.THL+DH}) or its left-right dual, applied to this bimodule,
gives the desired conclusion.
\end{proof}

So in the associative unital case of
Corollary~\ref{C.k-algs}, once we know by
the Hopkins-Levitzki Theorem that $_R R$ has finite length
as an $\!R\!$-module, the above result gives an alternative
proof of the conclusion of that corollary.

(Notes on the background of~(\ref{d.THL+DH}):
In \cite{GM},~(\ref{d.THL+DH}) is described as a result
of T.\,Lenagan, with a new proof communicated
to the author by D.\,Herbera.
Lenagan \cite{THL} had indeed proved the hard part
of~(\ref{d.THL+DH}), that if $M$ is both Artinian and Noetherian
on one side, and Noetherian on the other,
then it is also Artinian on the latter side;
and this is what is called Lenagan's Theorem in
most sources, e.g, \cite{TYL}.
However, Herbera's version in~\cite{GM}
tacitly supplies the additional argument showing
that if $M$ is merely assumed Artinian on one side and Noetherian
on the other, then it is also Noetherian on the former side.
It is that part, and not the part proved by
Lenagan, that we needed for our alternative proof
of the associative unital case of Corollary~\ref{C.k-algs}.
Incidentally, Lenagan formulated his result
for $\!2\!$-sided ideals, but as noted in
\cite[p.\,332, sentence after Theorem~11.30]{TYL},
his proof carries over verbatim to bimodules.)

For some results on when not-necessarily-unital associative
right Artinian rings must be right Noetherian, and related questions,
see \cite{Huynh} and papers cited there.

Returning to the relation between Corollary~\ref{C.k-algs} and
the Hopkins-Levitzki Theorem, we cannot hope to go the other way,
and obtain the latter from the former: We can't
get started, since the Artinian assumption
on $R$ as a left $\!R\!$-module does not, in general, by
itself give such a condition on $R$ as a $\!k\!$-module.
This suggests that we look for some result that can be applied
directly to the right and left $\!R\!$-module structures of $R;$ say a
generalization of Theorem~\ref{T.main} to a result
on balanced bilinear maps
\begin{equation}\begin{minipage}[c]{35pc}\label{d.bimods}
$f:{_S A_R} \times{_R B_T}\ \to\ {_S C_T}$
\end{minipage}\end{equation}
of bimodules over associative rings.

I have not been able to find such a generalization.
Let us take a brief look at the situation.

\section{Some counterexamples, and a question}\label{S.ceg}

Given a map~(\ref{d.bimods}), the
annihilator of every element of $A$ is a $\!T\!$-submodule
of $B,$ and the annihilator of every element of $B$ is an
$\!S\!$-submodule of $A;$ so one might hope for a result
assuming the Artinian property for $_S A$ and $B_T.$
But this does not work:
if we take a field $k,$ two $\!k\!$-algebras $S$ and $T,$
any nonzero Artinian left $\!S\!$-module $A,$
and an Artinian but non-Noetherian right $\!T\!$-module $B$
(for instance, $S=T=k[t],$ $A=B=k((t))/k[[t]],$
equivalently, $k(t)/k[t]_{(t)}),$ then we see that the canonical map
\begin{equation}\begin{minipage}[c]{35pc}\label{d.otimes}
$\otimes:\ {_S A_k} \times {_k B_T}\ \to\ {_S(A} \otimes_k B)_T$
\end{minipage}\end{equation}
is a counterexample: it is nondegenerate, and
$_S A$ and $B_T$ are Artinian, but they are not both Noetherian.

If, instead, we assume the Artinian condition on $A_R$ and $_R B,$
counterexamples are harder to find; but we can
get them using the following construction.

\begin{lemma}\label{L.simple_arb}
Let $k$ be a field, $R_0$ a $\!k\!$-algebra, and $_{R_0} M$ any
nonzero left $\!R_0\!$-module.
Then there exists a $\!k\!$-algebra $R$ having\\[.5em]
\textup{(i)} a left module $_R B$ whose lattice
of submodules is obtained, up to
isomorphism, from the lattice of submodules of $_{R_0} M$ by
adjoining one new element at the bottom,\\[.5em]
\textup{(ii)} a \emph{simple} right module $A_R,$ and\\[.5em]
\textup{(iii)} a nondegenerate $\!R\!$-balanced $\!k\!$-bilinear map
$f: A_R \times {_R B}\to k.$
\end{lemma}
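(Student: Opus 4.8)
The plan is to construct $R$, then $A_R$, then $_RB$ and $f$, after first reducing the three requirements to one. For any ring $R$, any simple right $R$-module $A$, and any left $R$-submodule $B$ of the left $R$-module $A^\vee:=\mathrm{Hom}_k(A,k)$ (with $(r\psi)(a)=\psi(ar)$), the evaluation map $f:A\times B\to k$, $f(a,b)=b(a)$, is automatically $R$-balanced, and, provided $B\neq\{0\}$, nondegenerate: on the $B$-side trivially, since each nonzero $b\in B$ is a nonzero functional, and on the $A$-side because $\{a\in A:b(a)=0\text{ for all }b\in B\}$ is a submodule of the simple module $A$, hence is $\{0\}$. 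So it suffices to produce a $k$-algebra $R$, a simple right $R$-module $A$, and a left $R$-submodule $B\subseteq A^\vee$ whose submodule lattice is that of $_{R_0}M$ with a new least element adjoined. Any such $B$ necessarily has a simple \emph{essential} socle $V$, with $B/V$ carrying the submodule lattice of $M$ — the natural arrangement being $B/V\cong{}_{R_0}M$, with $R$ acting on the quotient through a surjection $R\twoheadrightarrow R_0$; moreover, comparing the two maps $A\to B^\vee$ and $B\to A^\vee$ induced by $f$, one checks that $\dim_k V$ is forced to be \emph{infinite}. So the target is: find $R$ mapping onto $R_0$, a simple right $R$-module $A$, and a submodule of $A^\vee$ of this shape.

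For the construction I would fix an infinite-dimensional $k$-space $V$, let $R$ contain an algebra $E$ acting on $V$ — for instance $\mathrm{End}_k(V)$, or a suitable dense subalgebra — so that $_RV$ is simple and $V^\vee$ is a simple right $R$-module, and splice $R_0$ on by a (generalized) triangular matrix ring $R=\bigl(\begin{smallmatrix}E&N\\0&R_0\end{smallmatrix}\bigr)$, with the $(E,R_0)$-bimodule $N$ chosen large enough that the left $R$-module $B=\bigl(\begin{smallmatrix}V\\M\end{smallmatrix}\bigr)$ has $V$ essential in it. A routine computation of the submodules of $B$ then yields (i), and $A=V^\vee$ — a simple right $E$-module, hence (through the corner) a simple right $R$-module — gives (ii). What remains is to embed this $B$ into $A^\vee$; equivalently, to split the $k$-dual sequence $0\to M^\vee\to B^\vee\to V^\vee\to 0$ of right $R$-modules, or, failing that, to locate inside $B^\vee$ a simple right submodule mapping onto a simple submodule of $V^\vee$ rather than into $M^\vee$.

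This last step is the heart of the matter and the step I expect to be the main obstacle, because the obvious attempts fail: the very off-diagonal action of $N$ that makes $V$ essential in $B$ is exactly what prevents the evaluation pairing of $V^\vee$ with $B$ from being $R$-balanced — the $M^\vee$-component of $\psi\cdot r$ acquires a spurious term $\psi\circ n$ — so the displayed sequence does not split and $A=V^\vee$ is not the right choice. Overcoming this is precisely where the infinite-dimensionality of $V$ must be used: over an infinite-dimensional space a non-split extension can have a split $k$-dual, and one must choose the bimodule $N$, the extension $B$, and the simple right module $A$ (a simple submodule of $B^\vee$, larger than a single functional and surjecting onto part of $V^\vee$) in concert so that this slack is available. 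Producing compatible such choices, and then checking that (i), (ii) and (iii) hold simultaneously, is the real work of the proof.
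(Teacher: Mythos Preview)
Your reduction---find a simple right $R$-module $A$ and a nonzero left submodule $B\subseteq A^\vee$ with the prescribed lattice---is correct, but the construction is never carried out: you set up a triangular-matrix framework, note that the obvious choice $A=V^\vee$ fails, and then declare the remaining compatible choice of $N$, $B$, and $A$ to be ``the real work of the proof'' without doing it. Worse, the triangular framework itself is obstructed. Over $R=\bigl(\begin{smallmatrix}E&N\\0&R_0\end{smallmatrix}\bigr)$, every simple right module has the off-diagonal $N$ acting as zero (its $R_0$-component is always a submodule, so if the $E$-component is nonzero that submodule must vanish, forcing $X\cdot N=0$); hence $N$ also annihilates $A^\vee$. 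But a left module $B$ with $V$ essential and $B/V\cong M\neq 0$ requires a nontrivial $N$-action (otherwise $B$ splits as $V\oplus M$), so no such $B$ can embed in $A^\vee$. Thus the scaffold, not merely the first guess for $A$, has to be abandoned; your closing suggestion to look instead for $A$ inside $B^\vee$ is the right instinct, but you still owe the construction.

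The paper proceeds in exactly that spirit, but concretely: it builds $B$ first, as the eventually-constant sequences in $M^\omega$; takes $R\subseteq\mathrm{End}_k(B)$ to be the subalgebra spanned by the diagonal $R_0$-action together with the finite-rank maps supported on finitely many coordinates; and takes $A\subseteq B^\vee$ to be the functionals depending on only finitely many coordinates. With these explicit objects in hand, the submodule lattice of $_RB$, the simplicity of $A_R$, and the nondegeneracy of the evaluation pairing are each verified in a line or two. The moral is to let $R$ be a ring of operators on a pre-built $B$, rather than to assemble $R$ abstractly and then try to fit $A$ and $B$ to it.
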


\begin{proof}
From $_{R_0}M,$ we shall construct $B$ as a $\!k\!$-vector-space.
We shall then define $R$ as a certain $\!k\!$-algebra of linear
endomorphisms of $B,$ and $A$ as a certain $\!k\!$-vector space
of linear functionals $B\to k,$
closed under right composition with the actions of members of $R.$
The map $f: A\times B\to C$ will be
the function that evaluates members of $A$ at members of $B.$
Here are the details.

Writing $\omega$ for the set of natural numbers,
let $B\subseteq M^\omega$ be the vector space of those sequences
$x = (x_i)_{i\in\omega}$ which have the same
value at all but finitely many $i.$
Let $R$ be the $\!k\!$-algebra of $\!k\!$-linear maps $B\to B$
spanned by
\begin{equation}\begin{minipage}[c]{35pc}\label{d.R}
those maps which act by multiplication by an element
$r\in R_0$ simultaneously on all coordinates; i.e., by
$r x = (r x_i)_{i\in\omega},$
\end{minipage}\end{equation}
and
\begin{equation}\begin{minipage}[c]{35pc}\label{d.fin}
those maps which act by projecting to the sum of finitely
many of our copies of $M,$ then mapping this sum into itself
by an arbitrary finite-rank $\!k\!$-vector-space endomorphism.
\end{minipage}\end{equation}

It is easy to see that every nonzero $\!R\!$-submodule of $B$ contains
the submodule $B_\r{fin}$ of all elements having finite support
in $\omega,$ that an $\!R\!$-submodule $B'$ containing $B_\r{fin}$ is
determined by the values
that elements of $B'$ assume ``almost everywhere'', and that the set
of these values can be, precisely, any submodule of $M.$
Thus, the lattice of
submodules of $B$ containing $B_\r{fin}$ is isomorphic to the lattice
of submodules of $M;$ so the full lattice of submodules of $B$
consists of this and a new bottom element, the zero submodule.

Let $A$ be the set of all $\!k\!$-linear functionals $a$ on $B$ that
depend on only finitely many coordinates (i.e., for which there
exists a finite subset $I\subseteq\omega$ such that $a$ factors
through the projection of $B\subseteq M^\omega$ to $M^I).$
This set is easily seen to be closed
under right composition with elements of $R;$ hence we may regard
$A$ as a right $\!R\!$-module, and evaluation of
elements of $A$ on elements of $B$ gives an $\!R\!$-balanced
$\!k\!$-bilinear map $f: A\times B\to k.$
Further, for any $a\in A-\{0\}$ and $b\in B-\{0\},$
we can clearly find a $u\in R$ of the sort described in~(\ref{d.fin})
which carries $b$ to an element not in $\r{ker}(a),$ so that
$0\neq f(a,ub) = f(au,b).$
In particular, $f$ has the two properties defining nondegeneracy
(statement of Theorem~\ref{T.main}).

It remains to show that  $A_R$ is simple.
Given $a\in A-\{0\}$ and $a'\in A,$ we see that  there will exist
$y\in B$ with finite
support such that $a(y)=1.$
Choosing such a $y,$ define $u: B\to B$ by $u(x)=a'(x) y.$
It is easy to check that $u$ has the form~(\ref{d.fin}),
hence lies in $R,$ and that it satisfies $au = a',$ proving simplicity.
\end{proof}

Taking for $_{R_0} M$ in the above
lemma any Artinian non-Noetherian module over
a $\!k\!$-algebra $R_0,$ we get, as desired,
a nondegenerate balanced bilinear map~(\ref{d.bimods})
with $A_R$ and $_R B$ Artinian
(and $A_R$ Noetherian), but with $_R B$ non-Noetherian.

However, neither the examples obtained using~(\ref{d.otimes})
nor those gotten as above
satisfy all four possible Artinian conditions on $A$ and $B.$
So we ask

\begin{question}\label{Q.4artin}
If\textup{~(\ref{d.bimods})} is a nondegenerate balanced bilinear map,
and if all of $_S A,$ $A_R,$ $_R B$ and $B_T$ are Artinian, must
these modules also be Noetherian?
\end{question}

If the answer is positive, one could look at intermediate
cases, e.g., where three of the above Artinian conditions are assumed.
(The case of~(\ref{d.otimes}) where $S$ is our given field~$k,$
$_S A = {_k k},$ and $B_T$ is an Artinian
non-Noetherian right module over a $\!k\!$-algebra $T,$
shows that the assumption that all the above modules
\emph{except} $_R B$ are Artinian is \emph{not} sufficient to prove
$B_T$ Noetherian; though it does not say whether those conditions
are sufficient to make $_S A$ and/or $A_R$ Noetherian.)

Some results with the desired sort of conclusion, but with
hypotheses of a stronger sort than those suggested
above, are proved in~\cite{GM}.

Alongside Question~\ref{Q.4artin} and its close relatives,
one might look for results with the weaker conclusion
that $A$ or $B$ have ascending chain condition on sub\emph{bimodules}.

\end{document}